\newcommand{\bbC}{{\mathbb{C}}}
\newcommand{\calH}{{\mathcal H}}
\newcommand{\calK}{{\mathcal K}}
\newcommand{\bdone}{{\boldsymbol{1}}}
\newcommand{\lb}{\label}
\newcommand{\tr}{\text{\rm{Tr}}}
\newcommand{\ran}{\text{\rm{ran}}}
\newcommand{\bi}{\bibitem}
\newcommand{\beq}{\begin{equation}}
\newcommand{\eeq}{\end{equation}}
\newcommand{\ba}{\begin{align}}
\newcommand{\ea}{\end{align}}
\newcounter{smalllist}
\newcommand{\comm}[1]{}
\numberwithin{equation}{section}
\newtheorem{theorem}{Theorem}
\newtheorem*{p2.1}{Proposition 2.1}
\newtheorem{lemma}[theorem]{Lemma}
\newtheorem{corollary}[theorem]{Corollary}
\theoremstyle{definition}
\newtheorem*{remark}{Remark}
\newcommand{\norm}[1]{\lVert#1\rVert}
\begin{document}

\title[Pairs of Projections]{Unitaries Permuting Two Orthogonal Projections}
\author[B.~Simon]{Barry Simon$^{1,2}$}

\thanks{$^1$ Departments of Mathematics and Physics, Mathematics 253-37, California Institute of Technology, Pasadena, CA 91125.
E-mail: bsimon@caltech.edu}

\thanks{$^2$ Research supported in part by NSF grant DMS-1265592 and in part by Israeli BSF Grant No. 2010348.}

\

\date{\today}
\keywords{Pairs of Projections, Index}
\subjclass[2010]{47A05, 47A46, 47A53}

\begin{abstract} Let $P$ and $Q$ be two orthogonal projections on a separable Hilbert space, $\calH$.  Wang, Du and Dou proved that there exists a unitary, $U$, with $UPU^{-1} =Q, \quad UQU^{-1} = P$ if and only if $\dim(\ker P \cap \ker(1-Q)) = \dim(\ker Q \cap \ker(1-P))$ (both may be infinite).  We provide a new proof using the supersymmetric machinery of Avron, Seiler and Simon.  \end{abstract}

\maketitle

I am delighted at this opportunity to present a birthday bouquet to Rajendra Bhatia whom I have long admired.  He once told me that he had learned functional analysis from Reed--Simon. He more than returned the favor since I've learned so much from his books especially that much of matrix theory is actually analysis.  In particular, my interest in Loewner's theorem on monotone matrix functions was stirred by his clear presentation of the Krein--Millman proof of that result.  As I've been writing my own monograph on Loewner's Theorem, I discovered several time areas of application and extension of that result where Bhatia was a key figure and where invariably his lucid prose helped me in absorbing the developements.

Let $P$ and $Q$ be two orthogonal projections on a separable Hilbert space, $\calH$.  It is a basic result in eigenvalue perturbations theory that when
\begin{equation}\label{1}
  \norm{P-Q} < 1
\end{equation}
there exists a unitary $U$ so that
\begin{equation}\label{2A}
  UP=QU
\end{equation}
It is even known that there exist unitaries, $U$, so that
\begin{equation}\label{2}
  UPU^{-1} =Q, \quad UQU^{-1} = P
\end{equation}

The simpler question involving \eqref{2A} goes back to Sz-Nagy \cite{SzN} and was further studied by Kato \cite{Kato} who found a cleaner formula for $U$ than Sz-Nagy, namely Kato used
\begin{equation}\label{3}
U=[QP+(1-Q)(1-P)]\left[1-(P-Q)^2\right]^{-1/2}
\end{equation}
Using Nagy's formula, Wolf \cite{Wolf} had extended this to arbitrary pairs of projections on a Banach space (requiring only that $U$ is invertible rather than unitary) so long as
\begin{equation}\label{4}
  \norm{P-Q}\norm{P}^2 < 1 \qquad \norm{P-Q}\norm{Q}^2 < 1
\end{equation}
For non--orthogonal projections and projections on a Banach space, in general, $\norm{P} \ge 1$ with equality in the Hilbert space case only if P is orthogonal so \eqref{4} is strictly stronger than \eqref{1}.  One advantage of Kato's form \eqref{3}, is that in the Banach space case where the square root can be defined by a power series, it only requires \eqref{1}.

For the applications they had in mind, it is critical not only that U exists but that on the set of pairs that \eqref{1} holds, $U$ is analytic in $P$ and $Q$.  For they considered an analytic family, $A(z)$, and $\lambda_0$ an isolated eigenvalue of $A(0)$ of finite algebraic multiplicity.  Then one can define
\begin{equation*}
  P(z) = \frac{1}{2\pi i}\oint_{|\lambda-\lambda_0|=r} (\lambda - A(z))^{-1} d\lambda
\end{equation*}
for fixed small $r$ and $|z|$ small.  For $|z|$ very small, $\norm{P(z)-P(0)} < 1$.  If $U(z)$ is given by \eqref{3} with $Q=P(z)$, then $U(z)A(z)U(z)^{-1}$ leaves $\ran P(0)$ invariant and the study of eigenvalues of $A(z)$ near $\lambda_0$ is reduced to the finite dimensional problem $U(z)A(z)U(z)^{-1} \restriction \ran P(0)$.  See the books of Kato \cite{KatoBk}, Baumg\"{a}rtel \cite{BaumBk} or Simon \cite{OT} for this subject.

There is a rich structure of pairs of orthogonal projections when \eqref{1} might fail using two approaches.  One goes back to Krein et al. \cite{Krein}, Diximier \cite{Dix}, Davis \cite{Davis} and Halmos \cite{Hal}.  Let
\begin{equation}\label{5}
  \calK_{P,Q} = \ran P \cap \ker Q
\end{equation}
The four mutually orthogonal spaces $\calK_{P,Q},  \, \calK_{P,1-Q},  \, \calK_{1-P,Q}, \\ \, \calK_{1-P,1-Q}$ are invariant for $P$ and $Q$ and their mutual orthogonal complement has a kind of $2 \times 2$ matrix structure.  B\"{o}ttcher-Spitkovsky \cite{BS} have a comprehensive review of this approach.  Following them, we'll call this the Halmos approach since his paper had the clearest version of it.

A second approach, introduced by Avron--Seiler--Simon \cite{ASS},uses the operators
\begin{equation}\label{6}
  A=P-Q, \qquad B=1-P-Q
\end{equation}
which, by simple calculations, obey

\begin{equation}\label{7}
  A^2+B^2 = 1, \quad AB+BA=0
\end{equation}
 \begin{equation*}
  [P,A^2]=[Q,A^2]=[P,B^2]=[Q,B^2]=0
\end{equation*}

The last equations (at least for A) go back to the 1940's and were realized by Dixmier, Kadison and Mackey.  The definition of $B$ and first equation in \eqref{7} were noted by Kato \cite{Kato} who found the second equation in 1971 but never published it.  Because \eqref{7} involves a vanishing anticommutator, we call the use of the operators in \eqref{6} the supersymmetric approach.  One consequence of \eqref{7} is that it implies that if $P-Q$ is trace class, then its trace is an integer--indeed, as we'll discuss below, it is the index of a certain Fredholm operator.

The two approaches are related as shown by Amerein--Sinha \cite{AS} (see also Takesaki \cite[pp 306-308]{Tak} and Halpern \cite{Halp}).  In \cite{WDD}, Wang, Du and Dou proved the following lovely theorem

\begin{theorem} \lb{T1} Let $P$ and $Q$ be two orthogonal projections on a separable Hilbert space, $\calH$.  Then there exists a unitary obeying \eqref{2} if and only if
\begin{equation}\label{8}
  \dim(\calK_{P,Q}) = \dim(\calK_{1-P,1-Q})
\end{equation}
\end{theorem}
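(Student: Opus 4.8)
The plan is to move everything onto the self-adjoint operators $A=P-Q$ and $B=1-P-Q$ of \eqref{6}. The starting point is the pair of trivial identities $P=\tfrac12(1+A-B)$ and $Q=\tfrac12(1-A-B)$, which show that a unitary $U$ satisfies \eqref{2} \emph{if and only if}
\begin{equation*}
  UAU^{-1}=-A \qquad\text{and}\qquad UBU^{-1}=B.
\end{equation*}
Thus the theorem is equivalent to the assertion that such a $U$ exists iff \eqref{8} holds.

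Next I would run the spectral theory of $A$. Since $\norm{A}\le1$ and, by \eqref{7}, $A^2=1-B^2$ commutes with both $A$ and $B$, the subspaces $\ker(A-1)$, $\ker(A+1)$ and $\calH':=\ker(A^2-1)^{\perp}$ each reduce $A$ and $B$, and $\calH=\bigl(\ker(A-1)\oplus\ker(A+1)\bigr)\oplus\calH'$. A one-line positivity argument (from $0\le\langle Px,x\rangle\le\norm{x}^2$ and $\langle Qx,x\rangle\ge0$) identifies $\ker(A-1)=\ran P\cap\ker Q=\calK_{P,Q}$ and $\ker(A+1)=\ker P\cap\ran Q=\calK_{1-P,1-Q}$, and shows $B$ vanishes on both of these. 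On $\calH'$, $B^2=1-A^2$ is injective, so $B\restriction\calH'$ is an injective self-adjoint operator with dense range and $V:=\operatorname{sgn}(B\restriction\calH')$ (Borel functional calculus, $\operatorname{sgn}(0):=0$) is \emph{unitary} on $\calH'$, as $\ker(B\restriction\calH')=0$. Finally, $AB=-BA$ gives $A\,g_n(B)=-g_n(B)\,A$ for $g_n(t)=t(t^2+1/n)^{-1/2}$ (because $(B^2+1/n)^{-1/2}$ is a function of $B^2=1-A^2$, hence commutes with $A$), and since $g_n(B)\to V$ in the strong operator topology we get $AV=-VA$; trivially $BV=VB$.

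For the "if" direction, assume \eqref{8}, choose any unitary $Z$ interchanging $\calK_{P,Q}$ and $\calK_{1-P,1-Q}$ — this is the one and only place the equality of dimensions enters — and set $U=Z\oplus V$ on $\calH=\bigl(\calK_{P,Q}\oplus\calK_{1-P,1-Q}\bigr)\oplus\calH'$. On the first summand $A=\operatorname{diag}(1,-1)$ and $B=0$, so $ZAZ^{-1}=-A$ and $ZBZ^{-1}=B$ automatically; on $\calH'$ these two relations were just checked. Hence $UAU^{-1}=-A$ and $UBU^{-1}=B$, so $U$ satisfies \eqref{2}. Conversely, if $U$ satisfies \eqref{2} then $UAU^{-1}=-A$, so $U$ carries the $1$-eigenspace of $A$ onto the $1$-eigenspace of $-A$, i.e.\ $U(\calK_{P,Q})=\calK_{1-P,1-Q}$; since $U$ is isometric, $\dim\calK_{P,Q}=\dim\calK_{1-P,1-Q}$.

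The one place demanding care is the construction of $V$ on $\calH'$: one must verify that $\pm1$ are not eigenvalues of $A\restriction\calH'$ — equivalently $\chi_{\{0\}}(B\restriction\calH')=0$ — so that $\operatorname{sgn}(B\restriction\calH')$ is genuinely unitary rather than merely a partial isometry, and that the anticommutation $Af(B)=-f(B)A$, transparent for odd polynomials, survives the passage to the discontinuous function $f=\operatorname{sgn}$ through the strong limit above; the subtlety is only that $\spec(A\restriction\calH')$ may accumulate at $\pm1$ while $\calH'$ contains no corresponding eigenvectors. Granting \eqref{7} and these routine spectral facts, assembling $U$ is immediate.
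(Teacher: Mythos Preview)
Your proof is correct and follows essentially the same approach as the paper: reduce via the decomposition $\calH=(\calK_{P,Q}\oplus\calK_{1-P,1-Q})\oplus\calH'$ (the paper isolates this as Lemma \ref{L4}), then on $\calH'$ take $U=\operatorname{sgn}(B)$ and verify $UAU^{-1}=-A$ via a strong limit exploiting $AB=-BA$ and $[A,B^2]=0$. Your regularization $g_n(B)=B(B^2+1/n)^{-1/2}$ is a cosmetic variant of the paper's $B(|B|+\epsilon)^{-1}$, and your upfront reformulation of \eqref{2} as $UAU^{-1}=-A$, $UBU^{-1}=B$ is exactly the paper's concluding step \eqref{17} stated first.
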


The literature on pairs of projections is so large that it is possible this was also proven elsewhere.  Their proof uses the Halmos representation.  Our goal here is to provide a supersymmetric proof which seems to us simpler and more algebraic (although we understand that simplicity is in the eye of the beholder).  Our proof will also have a simple explicit form for $U$.  Before turning to the proof, we want to note two corollaries of Theorem \ref{T1}.

One notes first that since $\ran R = \ker(1-R)$ for any projection $R$ and $P,Q \ge 0$, we have that
\begin{equation*}
  \calK_{P,Q} = \{\varphi\,|\,A\varphi=\varphi\}, \quad \calK_{1-P,1-Q} = \{\varphi\,|\,A\varphi=-\varphi\}
\end{equation*}
Thus $\eqref{1} \Rightarrow \dim \calK_{P,Q} = \calK_{1-P,1-Q} = 0$, so Theorem \ref{1} implies

\begin{corollary} \lb{C2} $\eqref{1} \Rightarrow$ the existence of $U$ obeying \eqref{2}.
\end{corollary}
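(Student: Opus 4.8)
The plan is to deduce Corollary \ref{C2} directly from Theorem \ref{T1}, by checking that hypothesis \eqref{8} is automatic whenever \eqref{1} holds. The bridge is the identification, already recorded just above the statement, of the two relevant subspaces with eigenspaces of the self-adjoint operator $A = P-Q$: namely $\calK_{P,Q} = \{\varphi \mid A\varphi = \varphi\}$ and $\calK_{1-P,1-Q} = \{\varphi \mid A\varphi = -\varphi\}$, which comes from $\ran R = \ker(1-R)$ for a projection $R$ together with $P,Q \ge 0$.

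First I would note that since $0 \le P \le 1$ and $0 \le Q \le 1$ one always has $-1 \le A \le 1$, so $\spec(A) \subseteq [-1,1]$ and $\norm{A} \le 1$. Under \eqref{1} we have $\norm{A} = \norm{P-Q} < 1$, hence $\spec(A) \subseteq [-\norm{A},\norm{A}]$, so neither $1$ nor $-1$ is an eigenvalue of $A$. Therefore $\calK_{P,Q} = \{0\} = \calK_{1-P,1-Q}$, so $\dim(\calK_{P,Q}) = 0 = \dim(\calK_{1-P,1-Q})$ and \eqref{8} holds. Theorem \ref{T1} then supplies a unitary $U$ obeying \eqref{2}, which is exactly the claim.

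There is essentially no obstacle here: this genuinely is a corollary, and the only point requiring even a line of justification is the eigenspace description of $\calK_{P,Q}$ and $\calK_{1-P,1-Q}$, which is the elementary observation made in the text immediately preceding the corollary. I would add one remark consistent with the introduction: in this regime the unitary produced by the (forthcoming) proof of Theorem \ref{T1} should reduce to an explicit closed-form expression of the Kato type \eqref{3}, so that Corollary \ref{C2} recovers the classical perturbation-theoretic result together with a concrete formula for $U$ rather than a bare existence statement.
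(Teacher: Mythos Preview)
Your proof is correct and follows exactly the paper's own argument: from $\norm{P-Q}<1$ one sees that $\pm 1$ are not eigenvalues of $A=P-Q$, hence $\calK_{P,Q}=\calK_{1-P,1-Q}=\{0\}$ and Theorem~\ref{T1} applies. Your closing remark is slightly off, however: the explicit unitary produced in the proof of Theorem~\ref{T1} is $U=\text{\rm{sgn}}(B)=(1-P-Q)\bigl[1-(P-Q)^2\bigr]^{-1/2}$, which is \emph{not} Kato's formula \eqref{3}---as the paper later notes, in the $\bbC^2$ case Kato's $U$ is a rotation while $\text{\rm{sgn}}(B)$ is a reflection.
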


The second corollary concerns the case where $P-Q$ is compact.  In that case $K=QP\restriction \ran\, P$ as a map of $\ran\, P$ to $\ran\, Q$ is Fredholm and $\calK_{P,Q} = \ker K$ while $\calK_{1-P,1-Q} = \ran\, K^\perp$ so \eqref{8} is equivalent to saying that the index of $K$ is $0$ so we get

\begin{corollary} \lb{C3} If $P-Q$ is compact, then there exists a $U$ obeying \eqref{2} if and only if $\text{\rm{Index}} = 0$.
\end{corollary}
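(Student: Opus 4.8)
The plan is to derive Corollary \ref{C3} from Theorem \ref{T1} by showing that, once $A=P-Q$ is compact, the operator $K=QP\restriction\ran P$, regarded as a map of $\ran P$ into $\ran Q$, is Fredholm and that condition \eqref{8} is then equivalent to $\text{\rm{Index}}(K)=0$. The two points to nail down are: (i) $K$ is Fredholm, so that both dimensions appearing in \eqref{8} are automatically finite and their difference is $\text{\rm{Index}}(K)$; and (ii) $\ker K$ and $\mathrm{coker}\,K$ are precisely the spaces $\calK_{P,Q}$ and $\calK_{1-P,1-Q}$.

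For (i) I would start from the adjoint. A one-line inner-product computation gives $K^*=PQ\restriction\ran Q$ as a map of $\ran Q$ into $\ran P$, hence $K^*K=PQP\restriction\ran P$ and $KK^*=QPQ\restriction\ran Q$. A short computation from $A^2=P+Q-PQ-QP$ gives $PQP=P-PA^2P$ and $QPQ=Q-QA^2Q$; and since $A^2$ commutes with both $P$ and $Q$ (another part of \eqref{7}), these simplify to $PQP=(1-A^2)P$ and $QPQ=(1-A^2)Q$. Thus $K^*K$ is the restriction of $1-A^2$ to $\ran P$ and $KK^*$ the restriction of $1-A^2$ to $\ran Q$. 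Because $A$ is compact, $A^2$ is compact, so each of $K^*K$ and $KK^*$ has the form (identity on a subspace) minus (compact), hence is Fredholm of index $0$. The usual chain of consequences then gives that $K$ is Fredholm: $\ker K=\ker K^*K$ and $\ker K^*=\ker KK^*$ are finite dimensional, while $\ran K\supseteq\ran KK^*$, which has finite codimension in $\ran Q$, forcing $\ran K$ to be closed and of finite codimension.

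For (ii): $K\varphi=0$ with $\varphi\in\ran P$ says exactly $Q\varphi=0$, i.e.\ $\varphi\in\ran P\cap\ker Q=\calK_{P,Q}$; and since $\ran K$ is closed, $\mathrm{coker}\,K\cong\ker K^*=\{\psi\in\ran Q:P\psi=0\}=\ran Q\cap\ker P$, which is $\ran(1-P)\cap\ker(1-Q)=\calK_{1-P,1-Q}$. Consequently $\dim\calK_{P,Q}$ and $\dim\calK_{1-P,1-Q}$ are both finite and $\dim\calK_{P,Q}-\dim\calK_{1-P,1-Q}=\text{\rm{Index}}(K)$, so \eqref{8} holds precisely when $\text{\rm{Index}}(K)=0$. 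Feeding this into Theorem \ref{T1} yields Corollary \ref{C3}.

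The one step where care is genuinely needed is establishing that $K$ is Fredholm, rather than merely that $\ker K$ is finite dimensional; the $K^*K$/$KK^*$ route above handles this cleanly by reducing everything to ``identity minus compact'', and it has the side benefit of making the ``both may be infinite'' clause of Theorem \ref{T1} irrelevant in this setting, since compactness of $P-Q$ forces both dimensions in \eqref{8} to be finite. Everything else is routine bookkeeping with orthogonal projections.
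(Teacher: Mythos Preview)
Your proof is correct and follows exactly the route sketched in the paper: identify $\ker K=\calK_{P,Q}$ and $(\ran K)^\perp=\calK_{1-P,1-Q}$, show $K$ is Fredholm when $P-Q$ is compact, and then invoke Theorem~\ref{T1}. The paper merely asserts these facts in a single sentence, while you supply the details (in particular the $K^*K=(1-A^2)\restriction\ran P$ computation and the closed-range argument), but the strategy is identical.
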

Avron el al \cite{ASS} essentially had these two corollaries many years before \cite{WDD} and this note points out that while \cite{ASS} didn't consider the general case of Theorem \ref{T1}, there is a small addition to their argument that proves the general result.

\begin{lemma} \lb{L4} To prove Theorem \ref{T1}, it suffices to prove it in the case where $\calK_{P,Q}=\calK_{1-P,1-Q}=\{0\}.$
\end{lemma}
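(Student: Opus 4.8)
The plan is to split off the subspaces $\calK_{P,Q}$ and $\calK_{1-P,1-Q}$ as a direct summand on which the desired unitary can be written down by hand, and reduce the construction of $U$ to the orthogonal complement, where by construction the hypothesis $\calK_{P,Q}=\calK_{1-P,1-Q}=\{0\}$ holds. First I would set $d = \dim\calK_{P,Q} = \dim\calK_{1-P,1-Q}$ (equal, by the hypothesis \eqref{8} of Theorem \ref{T1}, which we are allowed to assume since we are only trying to \emph{reduce} the theorem to a special case). Let $\calM = \calK_{P,Q}\oplus\calK_{1-P,1-Q}$. Recalling from the discussion preceding the lemma that $\calK_{P,Q}=\{\varphi: A\varphi=\varphi\}$ and $\calK_{1-P,1-Q}=\{\varphi: A\varphi=-\varphi\}$ with $A=P-Q$, both summands are invariant under $A$, hence under $A^2$, and since $P=\tfrac12(1+A+ B\text{-part})$... more simply: on $\calK_{P,Q}$ we have $A=1$, $B^2=1-A^2=0$ so $B=0$, hence $P-Q=1$ and $1-P-Q=0$, forcing $P=1,Q=0$ there; on $\calK_{1-P,1-Q}$ we get $P=0,Q=1$. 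In particular $\calM$ reduces both $P$ and $Q$, so $P = P_0\oplus P_1$, $Q=Q_0\oplus Q_1$ with respect to $\calH = \calM^\perp\oplus\calM$, where $P_1 = 1\oplus 0$ and $Q_1 = 0\oplus 1$ in the decomposition $\calM = \calK_{P,Q}\oplus\calK_{1-P,1-Q}$.

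Next I would handle the two pieces separately. On $\calM$, since $\dim\calK_{P,Q}=\dim\calK_{1-P,1-Q}=d$, pick any unitary (``flip'') $V:\calK_{P,Q}\to\calK_{1-P,1-Q}$ and set $U_1 = \begin{pmatrix}0 & V^{-1}\\ V & 0\end{pmatrix}$ on $\calM$; a one-line check gives $U_1 P_1 U_1^{-1} = Q_1$ and $U_1 Q_1 U_1^{-1} = P_1$. On $\calM^\perp$, the pair $(P_0, Q_0)$ satisfies $\calK_{P_0,Q_0} = \calK_{1-P_0,1-Q_0} = \{0\}$: indeed an $A$-eigenvector for $\pm 1$ inside $\calM^\perp$ would lie in $\calM$ by definition, a contradiction; and one must check that these intersections computed \emph{within} $\calM^\perp$ coincide with those of the original pair restricted there, which is immediate because $\ran P_0 = \ran P\cap\calM^\perp$, $\ker Q_0 = \ker Q\cap\calM^\perp$, etc. So the special case of Theorem \ref{T1} applies to $(P_0,Q_0)$ and yields a unitary $U_0$ on $\calM^\perp$ with $U_0 P_0 U_0^{-1} = Q_0$, $U_0 Q_0 U_0^{-1} = P_0$. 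Then $U = U_0\oplus U_1$ does the job for $(P,Q)$, proving the ``if'' direction in general from the special case; the ``only if'' direction of Theorem \ref{T1} is unaffected by the reduction (if a unitary $U$ intertwines $P$ and $Q$ as in \eqref{2} it maps $\calK_{P,Q}$ onto $\calK_{Q,P} = \calK_{1-P,1-Q}$... one checks $U\calK_{P,Q} = \calK_{Q,P}$ and $\calK_{Q,P}=\ran Q\cap\ker P = \calK_{1-P,1-Q}$ fails in general, so actually \eqref{8} must be argued directly, but this is part of the full theorem, not the reduction lemma).

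The only real content, and the step I would be most careful about, is verifying that $\calM$ reduces both $P$ and $Q$ and that the restricted pair $(P_0,Q_0)$ genuinely has trivial $\calK$-spaces; the rest is the bookkeeping of the $2\times 2$ block decomposition and the explicit anti-diagonal flip $U_1$. The argument that $\calM^\perp$ contains no $A$-eigenvector with eigenvalue $\pm1$ is essentially a tautology from the definition of $\calM$, and the compatibility of the intersection spaces with restriction is routine since $\calM$ is spanned by joint eigenvectors on which $P$ and $Q$ are scalar. One subtlety worth a sentence: $A=P-Q$ on $\calM^\perp$ is exactly the restriction of the original $A$, so its spectral projections at $\pm 1$ vanish there, i.e. $1$ and $-1$ are not eigenvalues of $A\restriction\calM^\perp$, which is the precise meaning of $\calK_{P_0,Q_0}=\calK_{1-P_0,1-Q_0}=\{0\}$.
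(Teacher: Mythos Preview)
Your approach matches the paper's: split off $\calM = \calK_{P,Q}\oplus\calK_{1-P,1-Q}$, build the swap unitary there by hand as an anti-diagonal block, and invoke the special case on $\calM^\perp$. The reduction of the ``if'' direction and the verification that $\calM$ reduces $P,Q$ with trivial $\calK$-spaces on the complement are correct and essentially identical to the paper.

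Your handling of the ``only if'' direction, however, is muddled. You correctly observe that a unitary $U$ satisfying \eqref{2} carries $\calK_{P,Q}$ onto $\calK_{Q,P}$, but then assert that ``$\calK_{Q,P}=\calK_{1-P,1-Q}$ fails in general.'' It does not fail: by definition $\calK_{Q,P}=\ran Q\cap\ker P$, while $\calK_{1-P,1-Q}=\ran(1-P)\cap\ker(1-Q)=\ker P\cap\ran Q$, so these are always the same subspace. Your first instinct was right, and the ``only if'' direction is a one-liner, exactly as the paper does it. Note too that this direction \emph{must} be handled inside the reduction lemma and cannot be deferred to ``the full theorem'': in the special case $\calK_{P,Q}=\calK_{1-P,1-Q}=\{0\}$ the condition \eqref{8} holds vacuously, so the ``only if'' half of the special case has no content and cannot be leveraged for the general statement.
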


\begin{proof} Let $\calH_1=\calK_{P,Q} \oplus \calK_{1-P,1-Q}$ and $\calH_2 = \calH_1^\perp$.  Note that $\calK_{P,Q}$ is orthogonal to $\calK_{1-P,1-Q}$ since $\ran\,P$ is orthogonal to $\ker P$.  $P$ and $Q$ leave $\calH_1$ invariant and so $\calH_2$.

If there is $U$ obeying \eqref{2}, then $U$ is a unitary map of $\calK_{P,Q}$ to $\calK_{1-P,1-Q}$ so their dimensions are equal and \eqref{8} holds.  On the other hand, if \eqref{8} holds, there is a unitary map $V$ on $\calH_1$ that maps $\calK_{P,Q}$ to $\calK_{1-P,1-Q}$ and vice versa. Clearly $V P\restriction \calH_1 V^{-1} = Q \restriction \calH_1$ and $V Q\restriction \calH_1 V^{-1} = P \restriction \calH_1$ since $P \restriction \calK_{P,Q} = \bdone, P \restriction \calK_{1-P,1-Q} = 0, Q \restriction \calK_{P,Q} = 0, Q \restriction \calK_{1-P,1-Q} = \bdone$.

$P_2 = P\restriction \calH_2, Q_2 = Q\restriction \calH_2$ obey $\calK_{P_2,Q_2} = \calK_{1-P_2,1-Q_2} = \{0\}$.  Thus the special case of the theorem implies there is a unitary $W:\calH_2 \to \calH_2$ with $WP_2W^{-1}=Q_2, WQ_2W^{-1}=P_2$.  $U=V\oplus W$ solves \eqref{2}
\end{proof}

\begin{proof} [Proof of Theorem \ref{T1}] By the lemma we can suppose that $A$ doesn't have eigenvalues $\pm 1$, so $B^2=1-A^2$ has $\ker B^2 =0$.  Thus $\ker B = 0$.  It follows that
\begin{equation}\label{9}
  s-\lim_{\epsilon \downarrow 0} B(|B|+\epsilon)^{-1} = \text{\rm{sgn}}(B) \equiv U
\end{equation}
where
\begin{equation}\label{10}
  \text{\rm{sgn}}(x) = \left\{
                         \begin{array}{ll}
                           \,1, & \textrm{if } x>0 \\
                           \,0, & \textrm{if } x=0 \\
                           -1, & \textrm{if }x<0
                         \end{array}
                       \right.
\end{equation}
so that $\text{\rm{sgn}}(B)$ is unitary since $\ker B = 0$.

Since
\begin{equation}\label{11}
  BA = -AB
\end{equation}
we see that
\begin{equation}\label{12}
  B^2A=AB^2
\end{equation}
so by properties of the square root (\cite[Thm. 2.4.4]{OT})
\begin{equation}\label{13}
   (|B|+\epsilon)A=A(|B|+\epsilon)
\end{equation}
Thus \eqref{11} implies that
\begin{equation}\label{14}
  (|B|+\epsilon)^{-1}BA=-AB(|B|+\epsilon)^{-1}
\end{equation}

By \eqref{9}, we see that
\begin{equation}\label{15}
  UAU^{-1} = -A
\end{equation}
Since $U$ is a function of $B$
\begin{equation}\label{16}
  UB=BU \Rightarrow UBU^{-1} = B
\end{equation}

We have that
\begin{equation}\label{17}
  P = \tfrac{1}{2}(A-B+\bdone), \qquad Q = \tfrac{1}{2}(-A-B+\bdone)
\end{equation}
so, by \eqref{15} and \eqref{16}, we have \eqref{2}.
\end{proof}

\begin{remark} I owe to the referee the interesting remark that in case \eqref{8} holds, the $U$ obeying \eqref{2} can be picked to also obey $U^2=\bdone$ (equivalently $U=U^*$) so that $U$ is a symmetry in the sense of Halmos-Kakutani \cite{HalKak}.  The operator $U = \text{\rm{sgn}}(B)$ we construct when $A$ doesn't have eigenvalue $\pm 1$ clearly obeys $U^2=\bdone$ so it suffices to construct such a $U$ in the case where $\calH=\calK_{P,Q}\oplus\calK_{1-P,1-Q}$ and \eqref{8} holds.  To do that, pick a unitary $T$ from $\calK_{P,Q}$ onto $\calK_{1-P,1-Q}$ and choose
\begin{equation*}
  U = \left(
        \begin{array}{cc}
          0 & T \\
          T^* & 0 \\
        \end{array}
      \right)
\end{equation*}
\end{remark}

To understand the difference between \eqref{3} and \eqref{4}, we note that in case $\calH = \bbC^2$ and $P, Q$ are two one-dimensional projections with $\tr(PQ) = \cos^2\theta$ (so $\theta$ is the angle between $\ran \, P$ and $\ran \, Q$), the $U$ of \eqref{4} is rotation by angle $\theta$ while the $U$ of \eqref{3} is reflection in the perpendicular bisector.

One interesting open question is whether there are extension of Theorem \ref{T1} (with $U$ unitary replaced by $U$ invertible) to non-self-adjoint Hilbert space projections and to general pairs of projections on a Banach space.


\end{document}